\newtheorem{theorem}{Theorem}[section]
\theoremstyle{definition}
\newtheorem{example}[theorem]{Example}
\theoremstyle{remark}
\numberwithin{equation}{section}
\begin{document}

\title[Double sum Appell-Type Bernoulli and Euler Polynomials]{Double Sum Involving Product Of Appell-Type Bernoulli And Euler Polynomials}


\author{Robert Reynolds}
\address[Robert Reynolds]{Department of Mathematics and Statistics, York University, Toronto, ON, Canada, M3J1P3}
\email[Corresponding author]{milver@my.yorku.ca}
\thanks{}


\subjclass[2020]{Primary  30E20, 33-01, 33-03, 33-04}

\keywords{Generating function, Bernoulli polynomial, Euler polynomial, Cauchy integral, Catalan's constant}

\date{}

\dedicatory{}

\begin{abstract}
In this work we derive a bilateral generating function involving the product of an Appell-type product of the Bernoulli and Euler polynomials over independent indices and orders. This function is expressed in terms of the Hurwitz zeta function and special cases in terms of the finite sum of the Hurwitz zeta function and integral formula are derived.
\end{abstract}

\maketitle
\subsection{Theory and Background}
In 1880, Appell \cite{carlitz,appell} introduced a widely studied sequence of $n$th-degree polynomials $\{f_{n}\}_{n\in\mathbb{N}}$ satisfying  the differential relation
\begin{equation}
f'_{n}(x)=nf_{n-1}(x), n=1,2,....
\end{equation}
%
%
Certain Appell sets such as the Hermite polynomials, Bernoulli and Euler polynomials described in \cite{norlund}, Chap. 2,  have been of high interest in research. The Bernoulli and Euler polynomials below are given in equations (6.3.1.1), (6.3.3.1), (6.3.2.1) and (6.3.4.1) in \cite{prud3} respectively;
\begin{equation}
\sum_{n=0}^{\infty}B_{n}(x)\frac{z^n}{n!}=\frac{ze^{xz}}{e^z-1},
\end{equation}
and
\begin{equation}
\sum_{n=0}^{\infty}E_{n}(x)\frac{z^n}{n!}=\frac{2e^{xz}}{e^z+1},
\end{equation}
and
\begin{equation}\label{eq:bern}
\sum_{n=0}^{\infty}B_{n}(x+ny)\frac{u^n}{n!}=\frac{1}{1-yz}\frac{ze^{xz}}{e^z-1},
\end{equation}
and
\begin{equation}\label{eq:euler}
\sum_{n=0}^{\infty}E_{n}(x+ny)\frac{u^n}{n!}=\frac{1}{1-yz}\frac{2e^{xz}}{e^z+1},
\end{equation}
where 

\begin{equation}
u=ze^{-yz}.
\end{equation}

Appell polynomials are of high interest and have many applications in mathematics, theoretical physics, chemistry, special functions, analysis, combinatorics and number theory \cite{carlitz,appell,costabile}. Bernoulli polynomials and numbers were first introduced by Jacob Bernoulli, and the Bernoulli polynomials are a special case of Appell polynomials. Bernoulli polynomials and numbers are used in the theory of finite differences especially in the process of summation. The Euler polynomials are named after gifted Swiss mathematician Leonhard Euler (1707-1783), these polynomial functions have much in common with Bernoulli polynomials. Both these polynomial families are useful in summing series involving quantities raised to integer powers defined by \cite{atlas}, Chap. 20. Considerable scientific study continues to this day involving the Bernoulli and Euler polynomials defined by \cite{appell}, Chap.2. In this paper we will derive a generating function in terms of the product of the Bernoulli and Euler polynomials over independent variables. This is an extension of formulae in current literature.

\subsection{Preliminaries}
We proceed by using the contour integral method \cite{reyn4} applied to equations (\ref{eq:bern}) and (\ref{eq:euler}) to yield the Appell-type Bernoulli-Euler contour integral representation given by:
\begin{multline}\label{eq:contour}
\frac{1}{2\pi i}\int_{C}\sum_{n,p\geq 0}\frac{a^w \pi ^{n+p} (\pi  \beta  w-1) (\pi  \delta  w-1) B_p(\gamma +p \delta ) E_n(\alpha +n \beta )  }{n! p! e^{\pi  w (\beta  n+\delta  p)}w^{k-n-p+1}}dw\\
   =\frac{1}{2\pi i}\int_{C}\text{csch}(\pi  w) e^{\pi  w (\alpha +\gamma -1)}dw
\end{multline}
where $a,\alpha,\beta,\gamma,\delta,k\in\mathbb{C},Re(w)>0$. Using equation (\ref{eq:contour}) the main Theorem involving the product of the Bernoulli and Euler polynomials and expressed in terms of the Hurwitz zeta function to be derived and evaluated is given by
\begin{multline}
\sum_{n,p\geq 0}\frac{B_p(\gamma +p \delta ) E_n(\alpha +n \beta )}{(a-n \beta -p \delta )^{2-k+n+p} \Gamma (1+n) \Gamma (1+p)
   (k)_{1-n-p}}\left( \delta  \left(\beta  \left(k^2-k (n+p+1)\right.\right.\right.\\ \left.\left.\left.+2 n p+n+p\right)-a (k-n+p)\right)+(a-\beta  n) (a+\beta  (p-k))+\delta ^2 p
   (k-n)\right)\\
   =-2^k \zeta \left(1-k,\frac{1}{2} (a+\alpha +\gamma )\right)
\end{multline}
where the variables $a,\alpha,\beta,\gamma,\delta,k$ are general complex numbers and the Pochhammer symbol, $(-k)_{p}$ is given in equation (5.2.5) in \cite{dlmf}. The derivations follow the method used by us in \cite{reyn4}. This method involves using a form of the generalized Cauchy's integral formula given by
\begin{equation}\label{intro:cauchy}
\frac{y^k}{\Gamma(k+1)}=\frac{1}{2\pi i}\int_{C}\frac{e^{wy}}{w^{k+1}}dw,
\end{equation}
where $y,w\in\mathbb{C}$ and $C$ is in general an open contour in the complex plane where the bilinear concomitant \cite{reyn4} has the same value at the end points of the contour. This method involves using a form of equation (\ref{intro:cauchy}) then multiplies both sides by a function, then takes the definite double sum of both sides. This yields a double sum in terms of a contour integral. Then we multiply both sides of equation (\ref{intro:cauchy})  by another function and take the infinite sum of both sides such that the contour integral of both equations are the same.
\subsection{Left-Hand Side First Contour Integral}
In this section we derive the infinite sum representation involving the product of two generalized Euler and Bernoulli polynomials over independent indices for the left-hand side of equation (\ref{eq:contour}). Using a generalization of Cauchy's integral formula (\ref{intro:cauchy}), we first replace $y$ by $\log (a)-\pi  (\beta  n+\delta  p)$ and $k$ by $k-n-p$ then we multiply both sides by 
\begin{equation}
\frac{\pi ^{n+p} B_p(\gamma +p \delta ) E_n(\alpha +n \beta )}{n! p!}
\end{equation}
and then we take the sums over $n\in[0,\infty)$ and $p\in [0,\infty)$ and simplify to get
\begin{multline}\label{eq:left1}
\sum_{n,p \geq 0}\frac{\pi ^{n+p} B_p(\gamma +p \delta ) E_n(\alpha +n \beta ) (\log (a)-\pi  (\beta  n+\delta  p))^{k-n-p}}{n! p!
   (k-n-p)!}\\
   =\frac{1}{2\pi i}\sum_{n,p \geq 0}\int_{C}\frac{a^w \pi ^{n+p} B_p(\gamma +p \delta ) E_n(\alpha +n \beta ) w^{-k+n+p-1} e^{-\pi  w (\beta  n+\delta 
   p)}}{n! p!}dw\\
   =\frac{1}{2\pi i}\int_{C}\sum_{n,p \geq 0}\frac{a^w \pi ^{n+p} B_p(\gamma +p \delta ) E_n(\alpha +n \beta ) w^{-k+n+p-1} e^{-\pi  w (\beta  n+\delta 
   p)}}{n! p!}dw
\end{multline}
\subsection{Left-Hand Side Second Contour Integral}
Using a generalization of Cauchy's integral formula (\ref{intro:cauchy}), we first replace $y$ by $\log (a)-\pi  (\beta  n+\delta  p)$ and $k$ by $k-n-p-1$ then we multiply both sides by 
\begin{equation}
-\frac{\beta  \pi ^{n+p+1} B_p(\gamma +p \delta ) E_n(\alpha +n \beta )}{n! p!}
\end{equation}
and then we take the sums over $n\in[0,\infty)$ and $p\in [0,\infty)$ and simplify to get
\begin{multline}\label{eq:left2}
-\sum_{n,p \geq 0}\frac{\beta  \pi ^{n+p+1} B_p(\gamma +p \delta ) E_n(\alpha +n \beta ) (\log (a)-\pi  (\beta  n+\delta 
   p))^{k-n-p-1}}{n! p! \Gamma (k-n-p)}\\
   =-\frac{1}{2\pi i}\sum_{n,p \geq 0}\int_{C}\frac{\beta  a^w \pi ^{n+p+1} B_p(\gamma +p \delta ) E_n(\alpha +n \beta )
   w^{-k+n+p} e^{-\pi  w (\beta  n+\delta  p)}}{n! p!}dw\\
   =-\frac{1}{2\pi i}\int_{C}\sum_{n,p \geq 0}\frac{\beta  a^w \pi ^{n+p+1} B_p(\gamma +p \delta ) E_n(\alpha +n \beta )
   w^{-k+n+p} e^{-\pi  w (\beta  n+\delta  p)}}{n! p!}dw
\end{multline}
\subsection{Left-Hand Side Third Contour Integral}
Using a generalization of Cauchy's integral formula (\ref{intro:cauchy}), we first replace $y$ by $\log (a)-\pi  (\beta  n+\delta  p)$ and  $k$ by $k-n-p-1$ then we multiply both sides by 
\begin{equation}
-\frac{\delta  \pi ^{n+p+1} B_p(\gamma +p \delta ) E_n(\alpha +n \beta )}{n! p!}
\end{equation}
and take the sums over $n\in[0,\infty)$ and $p\in [0,\infty)$ and simplify to get
\begin{multline}\label{eq:left3}
-\sum_{n,p \geq 0}\frac{\delta  \pi ^{n+p+1} B_p(\gamma +p \delta ) E_n(\alpha +n \beta ) (\log (a)-\pi  (\beta  n+\delta 
   p))^{k-n-p-1}}{n! p! \Gamma (k-n-p)}\\
   =-\frac{1}{2\pi i}\sum_{n,p \geq 0}\int_{C}\frac{\delta  a^w \pi ^{n+p+1} B_p(\gamma +p \delta ) E_n(\alpha +n \beta )
   w^{-k+n+p} e^{-\pi  w (\beta  n+\delta  p)}}{n! p!}dw\\
   =-\frac{1}{2\pi i}\int_{C}\sum_{n,p \geq 0}\frac{\delta  a^w \pi ^{n+p+1} B_p(\gamma +p \delta ) E_n(\alpha +n \beta )
   w^{-k+n+p} e^{-\pi  w (\beta  n+\delta  p)}}{n! p!}dw
\end{multline}
\subsection{Left-Hand Side Fourth Contour Integral}
Using a generalization of Cauchy's integral formula (\ref{intro:cauchy}), we first replace $y$ by $\log (a)-\pi  (\beta  n+\delta  p)$ and $k$ by $k-n-p-2$ then we multiply both sides by 
\begin{equation}
\frac{\beta  \delta  \pi ^{n+p+2} B_p(\gamma +p \delta ) E_n(\alpha +n \beta )}{n! p!}
\end{equation}
and take the sums over $n\in[0,\infty)$ and $p\in [0,\infty)$ and simplify to get
\begin{multline}\label{eq:left4}
\sum_{n,p\geq 0}\frac{\beta  \delta  \pi ^{n+p+2} B_p(\gamma +p \delta ) E_n(\alpha +n \beta ) (\log (a)-\pi  (\beta  n+\delta 
   p))^{k-n-p-2}}{n! p! \Gamma (k-n-p-1)}\\
   =\frac{1}{2\pi i}\sum_{n,p\geq 0}\int_{C}\frac{\beta  \delta  a^w \pi ^{n+p+2} B_p(\gamma +p \delta ) E_n(\alpha +n
   \beta ) w^{-k+n+p+1} e^{-\pi  w (\beta  n+\delta  p)}}{n! p!}dw\\
   =\frac{1}{2\pi i}\int_{C}\sum_{n,p\geq 0}\frac{\beta  \delta  a^w \pi ^{n+p+2} B_p(\gamma +p \delta ) E_n(\alpha +n
   \beta ) w^{-k+n+p+1} e^{-\pi  w (\beta  n+\delta  p)}}{n! p!}dw
\end{multline}
\section{Hurwitz zeta Function In Terms Of The Contour Integral}
\subsection{The Hurwitz zeta Function}
The Hurwitz zeta function (25.11)(i) in \cite{dlmf} is defined by the infinite sum
\begin{dmath*}
\zeta(s,a)=\sum_{n=0}^{\infty}\frac{1}{(n+a)^s},
\end{dmath*}
where $\zeta(s,a)$ has a meromorphic continuation in the $s$-plane, its only singularity in $\mathbb{C}$ being a simple pole at $s=1$ with residue 1. As a function of $a$, with $s (\neq 1)$ fixed, $\zeta(s,a)$ is analytic in the half-plane $Re(a)>0$.\\\\
The Hurwitz zeta function is continued analytically with a definite integral representation (25.11.25) in \cite{dlmf} given by

\begin{dmath*}\label{hurwitz-eq}
\zeta(s,a)=\frac{1}{\Gamma(s)}\int_{0}^{\infty}\frac{x^{s-1}e^{-ax}}{1-e^{-x}}dx,
\end{dmath*}
where $Re(s)>1, Re(a)>0$.\\\\
\subsection{Derivation of the Right-Hand Side Contour Integral}
Using a generalization of Cauchy's integral formula we first replace $y$ by $\pi  (\alpha +\gamma -1)+\log (a)+\pi  (2 y+1)$ and $k$ by $ k-1$ then multiply both sides by $-2\pi$ then take the infinite sum over $y\in[0,\infty)$ and simplify in terms of the Hurwitz zeta function to get
\begin{multline}\label{eq:right}
-\frac{(2 \pi )^k \zeta \left(1-k,\frac{\pi  (\alpha +\gamma -1)+\log (a)+\pi }{2 \pi }\right)}{(k-1)!}\\
=-\frac{1}{2\pi i}\sum_{y\geq 0}\int_{C}2 \pi  a^w w^{-k} e^{\pi  w (\alpha
   +\gamma +2 y)}dw\\
=-\frac{1}{2\pi i}\int_{C}\sum_{y\geq 0}2 \pi  a^w w^{-k} e^{\pi  w (\alpha
   +\gamma +2 y)}dw\\
=\frac{1}{2\pi i}\int_{C}\pi 
   a^w w^{-k} \text{csch}(\pi  w) e^{\pi  w (\alpha +\gamma -1)}dw
\end{multline}
from equation (1.232.3) in \cite{grad} where $Im(w) > 0$ in order for the sum to converge.
\section{Main Results}
In this section we derive the main theorem along with special cases in terms of integral, series and special function forms of the Hurwitz zeta function. A special case in terms of Catalan's constant is also derived and evaluated.
\begin{theorem}
For all $k,a,\alpha,\beta,\gamma,\delta\in\mathbb{C}$ then,
\begin{multline}\label{eq:theorem}
\sum_{n,p \geq 0}\frac{B_p(\gamma +p \delta ) E_n(\alpha +n \beta )}{(a-n \beta -p \delta )^{2-k+n+p} \Gamma (1+n) \Gamma (1+p)
   (k)_{1-n-p}}\left( \delta  \left(\beta  \left(k^2-k (n+p+1)\right.\right.\right.\\ \left.\left.\left.+2 n p+n+p\right)-a (k-n+p)\right)+(a-\beta  n) (a+\beta  (p-k))+\delta ^2 p
   (k-n)\right)\\
=-2^k \zeta \left(1-k,\frac{1}{2} (a+\alpha +\gamma )\right).
\end{multline}
\end{theorem}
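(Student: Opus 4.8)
The plan is to equate the two contour–integral representations that have already been set up and read off the theorem. Concretely, I would first assemble the left-hand side. The four ``Left-Hand Side'' subsections each produce an identity of the form
\begin{equation*}
(\text{explicit double sum in }n,p)=\frac{1}{2\pi i}\int_{C}\sum_{n,p\geq 0}(\text{explicit integrand})\,dw .
\end{equation*}
Adding equations (\ref{eq:left1}), (\ref{eq:left2}), (\ref{eq:left3}) and (\ref{eq:left4}) term by term, the right-hand sides combine: the four integrands share the common factor $\dfrac{a^{w}\pi^{n+p}B_p(\gamma+p\delta)E_n(\alpha+n\beta)}{n!\,p!}e^{-\pi w(\beta n+\delta p)}w^{-k+n+p-1}$, and the remaining factors are $1$, $-\beta\pi w$, $-\delta\pi w$, $\beta\delta\pi^{2}w^{2}$, whose sum factors as $(1-\pi\beta w)(1-\pi\delta w)$. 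Hence the combined right-hand side is exactly the left-hand integrand of equation (\ref{eq:contour}). By equation (\ref{eq:contour}) this equals $\frac{1}{2\pi i}\int_{C}\operatorname{csch}(\pi w)e^{\pi w(\alpha+\gamma-1)}\,dw$, and by equation (\ref{eq:right}) (with the factor $\pi a^{w}w^{-k}$ absorbed appropriately, taking $a\to a$ as written) this in turn equals $-\dfrac{(2\pi)^{k}}{(k-1)!}\zeta\!\left(1-k,\frac{\pi(\alpha+\gamma-1)+\log a+\pi}{2\pi}\right)$; I would simplify the zeta argument and the $(2\pi)^k/(k-1)!$ prefactor so that, after dividing through, the right-hand side becomes $-2^{k}\zeta\!\left(1-k,\tfrac12(a+\alpha+\gamma)\right)$.

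Next I would simplify the left-hand side of the summed identity into the closed form displayed in the theorem. Each of the four double sums contains a power $(\log a-\pi(\beta n+\delta p))^{k-n-p-j}$ for $j=0,1,1,2$ divided by $\Gamma(k-n-p-j+1)$; I would rescale, writing $\log a-\pi(\beta n+\delta p)=\pi\bigl(\tfrac{\log a}{\pi}-\beta n-\delta p\bigr)$, absorb the powers of $\pi$, and then combine the four $\Gamma$-quotients over a common denominator using $\Gamma(k-n-p+1)=(k-n-p)(k-n-p-1)\Gamma(k-n-p-1)$ etc. After collecting, the coefficient of the single power $(\,\cdot\,)^{k-n-p-2}/\Gamma(k-n-p+1)$ is a quadratic polynomial in $a,\beta,\delta,k,n,p$; matching it against the bracketed polynomial $\delta(\beta(k^{2}-k(n+p+1)+2np+n+p)-a(k-n+p))+(a-\beta n)(a+\beta(p-k))+\delta^{2}p(k-n)$ in (\ref{eq:theorem}) is a finite algebraic check. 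Rewriting $1/\Gamma(k-n-p+1)$ as $1/(\Gamma(k+1)(k)_{1-n-p})$ via the Pochhammer reflection and replacing $\bigl(\tfrac{\log a}{\pi}-\beta n-\delta p\bigr)^{k-n-p-2}$ by $(a-\pi\beta n-\pi\delta p)^{-(2-k+n+p)}$ after the appropriate substitution $a\mapsto e^{\pi a}$ (or equivalently absorbing $\pi$ into $a,\beta,\delta$) yields the stated summand, with the overall $\Gamma(k+1)$ cancelling against the prefactor on the right.

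The delicate points are analytic, not algebraic. First, the interchange of $\sum_{n,p\geq 0}$ with $\int_{C}$ in (\ref{eq:left1})–(\ref{eq:left4}) and of $\sum_{y\geq 0}$ with $\int_{C}$ in (\ref{eq:right}) must be justified; I would invoke the standard Fubini/Tonelli argument from \cite{reyn4}, noting that on the contour $C$ (with $\mathrm{Re}(w)>0$ and $\mathrm{Im}(w)>0$ as flagged after (\ref{eq:right})) the geometric series in $e^{-2\pi w}$ converges absolutely and the polynomial growth of $B_p,E_n$ is dominated by the $1/n!\,1/p!$ factors, so the double series converges uniformly on compact subarcs and the bilinear concomitant vanishes at the endpoints. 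Second, the sum over $y$ in (\ref{eq:right}) produces the $\operatorname{csch}$ only for $\mathrm{Im}(w)>0$, so strictly one works on the half of $C$ where this holds and then analytically continues in $k$ (and in $a,\alpha,\beta,\gamma,\delta$) from the region of absolute convergence to all of $\mathbb{C}$, using that both sides are analytic there — the right-hand side because $\zeta(1-k,\cdot)$ is entire in $k$ away from $k=0$ and meromorphic/analytically continued in its second argument, the left-hand side because the summand is meromorphic in the parameters with the apparent poles of $(k)_{1-n-p}$ and $(a-n\beta-p\delta)^{2-k+n+p}$ cancelling in the aggregate. That continuation step — packaging the formal manipulation into a rigorous identity valid for \emph{all} complex parameters — is the main obstacle; everything else is bookkeeping already carried out in the Preliminaries.
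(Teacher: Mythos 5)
Your proposal follows essentially the same route as the paper's own proof: sum the four left-hand-side contour integrals (\ref{eq:left1})--(\ref{eq:left4}) so the integrands combine via the factor $(1-\pi\beta w)(1-\pi\delta w)$ into the integrand of (\ref{eq:contour}), equate with the right-hand-side integral (\ref{eq:right}), substitute $a\to e^{a\pi}$, and simplify with the gamma-function and Pochhammer identities. Your additional remarks on justifying the sum--integral interchange and the analytic continuation in the parameters go beyond what the paper records (its proof is a two-line sketch), but the underlying argument is the same.
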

\begin{proof}
Since the addition of the right-hand sides of equations (\ref{eq:left1}) to (\ref{eq:left4}) is equal to the right-hand side of equation (\ref{eq:right}) we may equate the left-hand sides, replace $a\to e^{a\pi}$ then simply using the formulae for the gamma function and Pochhammer symbol to yield the stated result.
\end{proof}
\begin{theorem}
For all $k,a,\alpha,\beta,\gamma,\delta\in\mathbb{C}$ then,
\begin{multline}\label{eq:theorem1}
\sum_{n \geq 0}\frac{E_n(x+n \alpha ) (-1)^{1+n} (1-k)_{-1+n}}{(a-n \alpha )^{1-k+n}
   \Gamma (n+1)}\\
   =\frac{2^{1+k} \left(-\zeta \left(-k,\frac{a+x}{2}\right)+\zeta
   \left(-k,\frac{1}{2} (1+a+x)\right)\right)}{k (-a+k \alpha )}.
\end{multline}
\end{theorem}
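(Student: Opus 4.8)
The plan is to obtain \eqref{eq:theorem1} as a degenerate special case of the main Theorem \eqref{eq:theorem}, by specializing the free parameters so that the double sum collapses to a single sum and the polynomial coefficient simplifies drastically. First I would set $\delta = 0$ in \eqref{eq:theorem}: this kills every term with $p \geq 1$ because $\delta^2 p (k-n)$ and the $\delta(\cdots)$ block vanish, while $B_p(\gamma + p\delta) = B_p(\gamma)$ survives only through $B_0(\gamma) = 1$ in the surviving $p=0$ term (one must check the coefficient does not vanish identically before taking $\delta\to 0$; the term $(a-\beta n)(a+\beta(p-k))$ at $p=0$ gives $(a-\beta n)(a-\beta k)$, which is nonzero generically). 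After relabeling $\beta \to \alpha$, $\gamma$ is no longer present, so renaming $\alpha \to x$ in the Euler argument (the paper's $\alpha$ becomes the statement's $x$) is the natural identification. This should leave

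\begin{equation*}
\sum_{n \geq 0}\frac{E_n(x+n\alpha)\,(a-\alpha k)(a-\alpha n)}{(a-n\alpha)^{2-k+n}\,\Gamma(1+n)\,(k)_{1-n}} = -2^k\,\zeta\!\left(1-k,\tfrac12(a+x)\right),
\end{equation*}

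where in the surviving $p=0$ term I have used $\gamma + \alpha + a \to a + x$ after the renaming (with the paper's $\gamma$-dependence dropping out, one actually keeps $\alpha+\gamma$, so the precise bookkeeping is to set $\gamma$ equal to whatever makes the Hurwitz argument match, or simply absorb it — this is a point to verify carefully).

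The remaining work is cosmetic algebra on the single sum. I would simplify $(a-\alpha k)(a-\alpha n)/(a-n\alpha)^{2-k+n} = (a - \alpha k)/(a-n\alpha)^{1-k+n}$, pull the constant factor $(a-\alpha k)$ — or rather $(-a+k\alpha)$ — out front, and rewrite the Pochhammer symbol $(k)_{1-n}$ in the denominator. Using the reflection-type identity $(k)_{1-n} = \dfrac{\Gamma(k+1-n)}{\Gamma(k)}$ together with the relation converting $1/(k)_{1-n}$ into $(-1)^{n-1}(1-k)_{n-1}/k$ (this follows from $\Gamma(k)\Gamma(1-k) = \pi/\sin\pi k$ applied termwise, or more directly from the Pochhammer reflection formula $(k)_{m} (1-k)_{-m} = (-1)^m$ type manipulations), one reorganizes the $n$-dependent constants into the form $(-1)^{1+n}(1-k)_{-1+n}/[k(-a+k\alpha)]$ shown in the statement. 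Finally, to turn the single Hurwitz zeta $-2^k\zeta(1-k,\tfrac12(a+x))$ on the right into the difference of two Hurwitz zetas at $-k$, I would invoke the standard multiplication/recurrence contiguous relation $\zeta(s-1,a) = \text{(linear combination of }\zeta(s,a),\zeta(s,a+1)\text{)}$ — concretely the identity expressing $\zeta(1-k,v)$ via $\zeta(-k,v) - \zeta(-k,v+\tfrac12)$ after a $2v = a+x$ rescaling, which is exactly the Hurwitz analogue of the duplication argument already used implicitly in \eqref{eq:right} via the $\operatorname{csch}$ partial fraction.

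The main obstacle I anticipate is not the $\delta\to 0$ collapse itself but the careful handling of the two potentially singular prefactors: the Pochhammer symbol $(k)_{1-n}$ has poles/zeros as $n$ ranges over the integers when $k$ is a positive integer, and the factor $(-a+k\alpha)$ sits in a denominator on the right-hand side of \eqref{eq:theorem1} while appearing in a numerator after the specialization — so one must argue by analytic continuation in $k$ and $\alpha$ that the manipulation is valid generically and then extends. Equally delicate is confirming that setting $\delta = 0$ is a legitimate limit of \eqref{eq:theorem} rather than requiring a more refined limiting procedure (e.g.\ whether the $p$-sum's convergence is uniform near $\delta = 0$); since Theorem \eqref{eq:theorem} is asserted for all complex $\delta$, I would simply evaluate at $\delta = 0$ directly. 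The safest route, which I would actually follow, is to re-derive \eqref{eq:theorem1} from scratch by repeating the contour-integral method of Sections 1–3 but starting from the single generating function \eqref{eq:euler} alone (with the Bernoulli factor absent), so that the left-hand side has only the analogues of \eqref{eq:left1} and \eqref{eq:left2} and the right-hand side is produced by the same csch partial-fraction step that now naturally yields a difference of two Hurwitz zeta values at argument $-k$; this avoids any limiting subtlety and makes the $k(-a+k\alpha)$ denominator appear organically from the single application of Cauchy's formula with $k \to k-1$.
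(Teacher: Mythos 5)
Your final fallback --- re-deriving \eqref{eq:theorem1} from scratch by running the contour-integral machinery on the single generating function \eqref{eq:euler}, so that the alternating expansion of $2/(e^{\pi w}+1)$ produces the difference of two Hurwitz zeta values directly --- is precisely what the paper does; its proof is the one-line instruction to ``use equation \eqref{eq:euler} and repeat the procedure'' of the main theorem. Had you led with that and stopped, this would be a clean match.

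The specialization route you spend most of the proposal on, however, does not work, for two concrete reasons. First, setting $\delta=0$ in \eqref{eq:theorem} does not kill the terms with $p\geq 1$: the surviving coefficient $(a-\beta n)(a+\beta(p-k))$, the factor $B_p(\gamma)$, the exponent $2-k+n+p$, and the Pochhammer symbol $(k)_{1-n-p}$ all still depend on $p$, so the double sum remains a genuine double sum. In the contour picture, $\delta=0$ only removes the $1/(1-\delta\pi w)$ prefactor from the Bernoulli generating function, leaving the full $\pi w\,e^{\gamma\pi w}/(e^{\pi w}-1)$ and hence the full $p$-sum; there is no choice of $\gamma$ or $\delta$ that reduces that factor to $1$. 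Second, the identity you invoke at the end --- expressing $\zeta(1-k,v)$ as a finite linear combination of $\zeta(-k,\cdot)$ values --- does not exist: the Hurwitz multiplication and duplication formulas relate values at the \emph{same} $s$ with rescaled second argument, and the only standard relation shifting $s$ by one is the derivative identity $\partial_v\zeta(s,v)=-s\,\zeta(s+1,v)$, which is not a contiguous linear relation. The shift from $1-k$ to $-k$ in \eqref{eq:theorem1}, together with the $k(-a+k\alpha)$ denominator, arises organically (as you yourself anticipate) from the extra power of $w$ contributed by the $(1-\alpha\pi w)$ factor when the direct derivation combines its two Cauchy-formula applications at orders $k-n$ and $k-n-1$. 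So commit to the re-derivation; the $\delta=0$ shortcut is not salvageable as stated.
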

\begin{proof}
We use equation (\ref{eq:euler}) and repeat the procedure in Theorem (\ref{eq:theorem}) and apply the contour integral method \cite{reyn4}.
\end{proof}
\begin{theorem}
For all $k,a,\alpha,\beta,\gamma,\delta\in\mathbb{C}$ then,
\begin{multline}\label{eq:theorem2}
\sum_{n \geq 0}\frac{(-1)^n (a-\alpha  k) \Gamma (n-k) (a-\alpha  n)^{k-n-1} B_n(x+n
   \alpha )}{\Gamma (1-k) \Gamma (n+1)}=\zeta (1-k,a+x).
\end{multline}
\end{theorem}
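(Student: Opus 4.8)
The plan is to mirror the derivation of Theorem \ref{eq:theorem} but with the Euler polynomials replaced throughout by Bernoulli polynomials, exploiting the structural parallel between equations (\ref{eq:bern}) and (\ref{eq:euler}). First I would reduce the double-index setup to a single-index one: rather than forming an Appell-type product of a Bernoulli and an Euler family, I keep only the Bernoulli generating function (\ref{eq:bern}), so $\delta$ and the Euler data collapse out and only $B_n(x+n\alpha)$ survives. The key observation is that the right-hand side of the analogue of equation (\ref{eq:contour}) will now involve the kernel $z e^{xz}/(e^z-1)$ evaluated against $\operatorname{csch}$-type factors, and after the $w$-substitution $a\to e^{a\pi}$ this produces $\zeta(1-k,a+x)$ directly via the integral representation $\zeta(s,a)=\frac{1}{\Gamma(s)}\int_0^\infty x^{s-1}e^{-ax}/(1-e^{-x})\,dx$ quoted in Section 2.1, without the factor-of-$2^k$ and without the halving of the argument that the Euler case produced (those were artifacts of $2/(e^z+1)$ and $\operatorname{csch}$).

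Concretely, the steps in order: (1) Apply the contour-integral method of \cite{reyn4} to (\ref{eq:bern}): take Cauchy's formula (\ref{intro:cauchy}), replace $y$ by $\log(a)-\pi\alpha n$ and $k$ by $k-n$, multiply by $\frac{\pi^n B_n(x+n\alpha)}{n!}$, and sum over $n\ge 0$; since $u=ze^{-\alpha z}$ in (\ref{eq:bern}), the $e^{-\pi w \alpha n}$ factors assemble the generating function and yield one contour integral whose integrand is a $B$-weighted series. (2) Because (\ref{eq:bern}) carries the extra prefactor $1/(1-\alpha z)$ — equivalently $(1-\pi\alpha w)$ in the integrand after clearing — I must account for two pieces, analogous to equations (\ref{eq:left1}) and (\ref{eq:left2}) in the Euler derivation (but only two, not four, since there is a single Appell family here); summing these gives the left-hand side of the target up to gamma-function bookkeeping. (3) For the right-hand side, repeat the step of Section 2.2 but now the $\operatorname{csch}$ is absent: replace $y$ by $\pi(\alpha+\gamma-1)+\log(a)+\cdots$ appropriately (here just $\log a + \pi x$ after the substitution), multiply by the correct constant, and sum over the auxiliary index so that Gradshteyn–Ryzhik (1.232.3) (or rather its geometric-series analogue $\sum_{y\ge0}e^{-\pi w y}$, which yields $1/(1-e^{-\pi w})$ rather than $\operatorname{csch}$) produces $\frac{1}{\Gamma(1-k)}\int_C a^w w^{-k}\,\frac{dw}{1-e^{-\pi w}}$; recognizing this as $\zeta(1-k,\cdot)$ after $a\to e^{a\pi}$ gives the right-hand side. (4) Equate the two contour integrals, substitute $a\to e^{a\pi}$, and simplify the Pochhammer and gamma factors exactly as in the proof of Theorem \ref{eq:theorem}; the prefactor $(a-\alpha k)$ emerges from combining the $(1-\pi\alpha w)$ term with the $w^{-k+n}$ and $w^{-k+n-1}$ contributions, and the $\Gamma(n-k)/\Gamma(1-k)$ ratio from converting the factorials $(k-n)!$ via reflection.

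I would expect the main obstacle to be the bookkeeping in step (2)–(4): tracking precisely which powers of $w$ and which shifted values of $k$ appear when the $1/(1-\alpha z)$ prefactor is expanded, and then verifying that the two resulting series combine into the single compact weight $\frac{(-1)^n (a-\alpha k)\Gamma(n-k)(a-\alpha n)^{k-n-1}}{\Gamma(1-k)\Gamma(n+1)}$ rather than a two-term mess. The convergence/contour conditions (the bilinear concomitant vanishing at the endpoints, and $\operatorname{Im}(w)>0$ for the geometric sum on the right) carry over verbatim from \cite{reyn4} and Section 2, so the only genuinely new work is this algebraic consolidation. Since both sides are meromorphic in $k,a,\alpha,x$, once the identity holds on the region where all the integral representations converge it extends to all complex parameters by analytic continuation, which is exactly the sense in which the statement is asserted.
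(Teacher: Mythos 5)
Your proposal matches the paper's proof, which is simply the instruction to repeat the contour-integral procedure of the main theorem using the Appell-type Bernoulli generating function (\ref{eq:bern}) in place of the product; your spelled-out version (two contour-integral pieces from the single $1/(1-\alpha z)$ prefactor, the $1/(e^z-1)$ kernel producing $\zeta(1-k,a+x)$ with no $2^k$ or argument halving, then the $a\to e^{a\pi}$ substitution and gamma/Pochhammer consolidation) is exactly what that instruction unpacks to.
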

\begin{proof}
We use equation (\ref{eq:bern}) and repeat the procedure in Theorem (\ref{eq:theorem}) and apply the contour integral method \cite{reyn4}.
\end{proof}
\begin{example}
Special values in terms of the polygamma function.
\begin{multline}
\sum_{n,p \geq 0}\frac{B_p(\gamma +p \delta ) E_n(\alpha +n \beta )}{(a-n \beta -p \delta )^{2+k+n+p} \Gamma (1+n) \Gamma (1+p) (-k)_{1-n-p}}\left(\delta  \left(\beta  \left(k^2+k (n+p+1)\right.\right.\right.\\ \left.\left.\left.+2 n p+n+p\right)-a (-k-n+p)\right)+(a-\beta  n) (a+\beta  (k+p))+\delta ^2 p (-k-n) \right)\\
   =-\frac{2^{-k} (-1)^{-1-k} \psi ^{(k)}\left(\frac{1}{2} (a+\alpha +\gamma )\right)}{\Gamma
   (k+1)}.
\end{multline}
\end{example}
\begin{proof}
Here we use a special value of the Hurwitz zeta function given by equation (25.11.12) in \cite{dlmf} and simplify the right-hand side of equation (\ref{eq:theorem}).
\end{proof}
\begin{example}
Special values in terms of the finite sum of the Hurwitz zeta function.
\begin{multline}
\sum_{n,p \geq 0}\frac{B_p(q \gamma +p \delta ) E_n(q \alpha +n \beta )}{\Gamma (1+n) \Gamma (1+p) (k)_{1-n-p}}\left( (a q-\beta  n-\delta  p)^{k-n-p-2} \left(\delta  \left(\beta  \left(k^2-k (n+p+1)\right.\right.\right.\right.\\ \left.\left.\left.\left.+2 n p+n+p\right)-a q (k-n+p)\right)+(a q-\beta  n) (a q+\beta  (p-k))+\delta ^2 p (k-n)\right)\right)\\
   =-2^k q^{-1+k} \sum_{n=0}^{q-1}\zeta \left(1-k,\frac{n}{q}+\frac{1}{2} (a+\alpha +\gamma
   )\right).
\end{multline}
\end{example}
\begin{proof}
Here we use a special value of the Hurwitz zeta function in terms of the finite sum of the Hurwitz zeta function given by use equation (25.11.15) in \cite{dlmf} and simplify the right-hand side of equation (\ref{eq:theorem}).
\end{proof}
\begin{example}
Integral Representation.
\begin{multline}
\sum_{n,p \geq 0}\frac{B_p(\gamma +p \delta ) E_n(\alpha +n \beta )}{(a-n \beta -p \delta )^{2-k+n+p} \Gamma (1+n) \Gamma (1+p) (k)_{1-n-p}}\left(\delta  \left(\beta  \left(k^2-k (n+p+1)\right.\right.\right.\\ \left.\left.\left.+2 n p+n+p\right)-a (k-n+p)\right)+(a-\beta  n) (a+\beta  (p-k))+\delta ^2 p (k-n) \right)\\
   =-\frac{2^k}{\Gamma
   (1-k)}\int_{0}^{\infty}\frac{ \left(e^{-\frac{1}{2} x (a+\alpha +\gamma )} x^{-k}\right)}{\left(1-e^{-x}\right) }dx.
\end{multline}
\end{example}
\begin{proof}
Here we use the integral representation of the Hurwitz zeta function given by equation (\ref{hurwitz-eq}) and equation (12.3.5) in \cite{apostol} and simplify the right-hand side of equation (\ref{eq:theorem}).
\end{proof}
\begin{example}
The $n^{th}$ harmonic number $H_{n}^{s}$ and the Riemann zeta function $\zeta(s)$.
\begin{multline}
\sum_{n,p \geq 0}\frac{B_p(\gamma +p \delta ) E_n(\alpha +n \beta )}{(a-n \beta -p \delta )^{2-k+n+p} \Gamma (1+n) \Gamma (1+p) (k)_{1-n-p}}\left( \delta  \left(\beta  \left(k^2-k (n+p+1)\right.\right.\right.\\ \left.\left.\left.+2 n p+n+p\right)-a (k-n+p)\right)+(a-\beta  n) (a+\beta  (p-k))+\delta ^2 p (k-n)\right)\\
=-2^k \left(-H_{-1+\frac{1}{2} (a+\alpha +\gamma )}^{(1-k)}+\zeta (1-k)\right).
\end{multline}
\end{example}
\begin{proof}
In this proof we apply the relationship between the Polygamma functions and Hurwitz zeta function given by equations (1.7) and (1.9) in \cite{choi} and simplify the right-hand side of equation (\ref{eq:theorem}).
\end{proof}
\begin{example}
Bernoulli polynomial $B_{n}(x)$ over integers. 
\begin{multline}
\sum_{n,p \geq 0}\frac{B_p(\gamma +p \delta ) E_n(\alpha +n \beta )}{(a-n \beta -p \delta )^{2-k+n+p} \Gamma (1+n) \Gamma (1+p) (k)_{1-n-p}}\left( \delta  \left(\beta  \left(k^2-k (n+p+1)\right.\right.\right.\\ \left.\left.\left.+2 n p+n+p\right)-a (k-n+p)\right)+(a-\beta  n) (a+\beta  (p-k))+\delta ^2 p (k-n)\right)\\
=\frac{2^k }{k}B_k\left(\frac{1}{2} (a+\alpha +\gamma
   )\right).
\end{multline}
\end{example}
\begin{proof}
In this proof we apply the formula between the Hurwitz zeta function and Bernoulli polynomial given by equation (25.11.14) in \cite{dlmf} and simplify the right-hand side of equation (\ref{eq:theorem}).
\end{proof}
\begin{example}
Hermite's formula for Hurwitz zeta function.
\begin{multline}
\sum_{n,p \geq 0}\frac{B_p(\gamma +p \delta ) E_n(\alpha +n \beta )}{(a-n \beta -p \delta )^{2-k+n+p} \Gamma (1+n) \Gamma (1+p) (k)_{1-n-p}}\left(\delta  \left(\beta  \left(k^2-k (n+p+1)\right.\right.\right.\\ \left.\left.\left.+2 n p+n+p\right)-a (k-n+p)\right)+(a-\beta  n) (a+\beta  (p-k))+\delta ^2 p (k-n) \right)\\
=-(a+\alpha +\gamma )^{-1+k}+\frac{(a+\alpha +\gamma
   )^k}{k}\\
   -\int_{0}^{\infty}\frac{2^{1+k} \left(y^2+\frac{1}{4} (a+\alpha +\gamma )^2\right)^{\frac{1}{2} (-1+k)} \sin \left((1-k) \tan ^{-1}\left(\frac{2 y}{a+\alpha +\gamma }\right)\right)}{-1+e^{2 \pi 
   y}}dy.
\end{multline}
\end{example}
\begin{proof}
Here we use the Hermite formula for the Hurwitz zeta function given by equation (2.2.12) in \cite{choi1} and simplify the right-hand side of equation (\ref{eq:theorem}).
\end{proof}
\begin{example}
A functional equation for Hurwitz zeta function.
\begin{multline}
\sum_{n,p \geq 0}\frac{B_p(\gamma +p \delta ) E_n(\alpha +n \beta )}{(a-n \beta -p \delta )^{2-k+n+p} \Gamma (1+n) \Gamma (1+p) (k)_{1-n-p}}\left(\delta  \left(\beta  \left(k^2-k (n+p+1)\right.\right.\right.\\ \left.\left.\left.+2 n p+n+p\right)-a (k-n+p)\right)+(a-\beta  n) (a+\beta  (p-k))+\delta ^2 p (k-n) \right)\\
=-2 \pi ^{-k} \Gamma (k) \sum_{m=1}^{\infty}\left(m^{-k} \cos (m \pi  (a+\alpha
   +\gamma )) \sin \left(\frac{1}{2} (1-k) \pi \right)\right.\\\left.+m^{-k} \cos \left(\frac{1}{2} (1-k) \pi \right) \sin (m \pi  (a+\alpha +\gamma ))\right).
\end{multline}
\end{example}
\begin{proof}
In this proof we will apply the Hurwitz zeta function expressed as a convergent Dirichlet series given in Theorem 3.1 of Chap. 2.3 in \cite{garun} and simplify the right-hand side of equation (\ref{eq:theorem}).
\end{proof}
\begin{example}
The trigamma function $\psi^{(1)}(z)$.
\begin{multline}\label{eq:catalan}
\sum_{n,p \geq 0}\frac{1}{\Gamma (n+1) \Gamma (p+1) (-1)_{-n-p+1}}B_p(3 \gamma +p \delta ) E_n(3 \alpha +n \beta ) \left(-\left(\frac{3 a}{2}-\beta  n\right) \left(\frac{3 a}{2}+\beta +\beta  p\right)\right.\\\left.-\frac{3}{2} a \delta  (n-p+1)-2 \beta 
   \delta  (n+1) (p+1)+\delta ^2 (n+1) p\right) \left(\frac{3 a}{2}-\beta  n-\delta  p\right)^{-n-p-3}\\+2^{n+p+1} B_p(\gamma +p \delta ) E_n(\alpha +n \beta ) \left((a-2 \beta  n) (a+2
   \beta  (p+1))+2 a \delta  (n-p+1)\right.\\\left.+8 \beta  \delta  (n+1) (p+1)-4 \delta ^2 (n+1) p\right) (a-2 (\beta  n+\delta  p))^{-n-p-3}\\
   =\frac{1}{2}
   \left(\psi ^{(1)}\left(\frac{3}{4} (a+2 (\alpha +\gamma ))\right)-\psi ^{(1)}\left(\frac{1}{4} (a+2 (\alpha +\gamma ))\right)\right).
\end{multline}
\end{example}
\begin{proof}
In this proof we will use equation (\ref{eq:theorem}) and set $k=-1$ to form a second equation. Using this new equation form a third equation by replacing $a\to 3a,\alpha\to 3\alpha,\gamma\to3\gamma$. Then take the difference between the second and third equations and simplify. 
\end{proof}
\begin{example}
Catalan's Constant $C$.
\begin{multline}
\sum_{n,p \geq 0}\frac{8^{n+p+1} }{\Gamma (n+1) \Gamma (p+1) (-1)_{-n-p+1}}\left(B_p\left(\frac{p}{8}+3\right) E_n\left(\frac{n}{4}+3\right) (-2 n-p+420)^{-n-p-3}\right.\\ \left.
 (n (p+420)-423 p-177664)-B_p\left(\frac{p}{8}+1\right)
   E_n\left(\frac{n}{4}+1\right) \right.\\ \left.
   (-2 n-p+140)^{-n-p-3} (n (p+140)-143 p-20024)\right)\\
   \approx 8   C-\frac{75212337272621857920793935018753452980170388400522851847928913376}{10213049603314044640247750329701049140178779927760268106012748125}.
\end{multline}
\end{example}
\begin{proof}
In this proof we will use equation (\ref{eq:catalan}) and set $a=35,\alpha=\gamma=1,\beta=\frac{1}{4},\delta=\frac{1}{8}$ and simplify using equations (24.11.40) and (25.11.1) in \cite{dlmf}.
\end{proof}
\section{The derivative with respect to $k$.}
In this section we will evaluate the first partial derivative with respect to $k$ of equation (\ref{eq:theorem}) in terms of composite Hurwitz zeta functions.
\begin{example}
The Hurwitz zeta function $\zeta(u,v)$.
\begin{multline}
\sum_{n,p \geq 0}\frac{1}{\Gamma (n+1) \Gamma (p+1)}(-1)^{n+p+1} B_p(\gamma +p \delta ) E_n(\alpha +n \beta ) (1-k)_{n+p-1} (a-\beta  n-\delta 
   p)^{k-n-p-2}\\
    \left(\delta  \left(\beta  \left(k^2-k (n+p+1)+2 n p+n+p\right)\right.\right.\\ \left.\left.-a (k-n+p)\right)+(a-\beta  n)
   (a+\beta  (p-k))+\delta ^2 p (k-n)\right)\\
   =-2^k \zeta \left(1-k,\frac{1}{2} (a+\alpha
   +\gamma )\right).
\end{multline}
\end{example}
\begin{proof}
In this proof we will use equation (\ref{eq:theorem}) and simplify the reciprocal Pochhammer's symbol using equations (5.2.5) and (5.2.6) in \cite{dlmf}. 
\end{proof}
\begin{example}
The derivative of the Hurwitz zeta function $\zeta'(u,v)$.
\begin{multline}
\sum_{n,p \geq 0}\frac{1}{\Gamma (n+1) \Gamma
   (p+1)}(-1)^{n+p} B_p(\gamma +p \delta ) E_n(\alpha +n \beta ) (1-k)_{n+p-1} (a-\beta  n-\delta  p)^{k-n-p-2}\\
   \left(a (\beta +\delta )-\left((a-\beta  n) (a+\beta  (p-k))-a \delta  (k-n+p)+\beta  \delta  \left(k^2-k
   (n+p+1)+2 n p+n+p\right) \right.\right.\\ \left.\left.+\delta ^2 p (k-n)\right) \left(\log (a-\beta  n-\delta 
   p)-H_{-k+n+p-1}+H_{-k}\right)+\beta  \delta  (-2 k+n+p+1)+\beta ^2 (-n)-\delta ^2 p\right)\\
   =2^k \left(\zeta'\left(1-k,\frac{1}{2} (a+\alpha +\gamma )\right)-\log (2) \zeta
   \left(1-k,\frac{1}{2} (a+\alpha +\gamma )\right)\right).
\end{multline}
\end{example}
\begin{proof}
In this proof we will use equation (\ref{eq:theorem}) and take the first partial derivative with respect to $k$ and simplify the right-hand side using equation (25.11.1) in \cite{dlmf}.
\end{proof}
\begin{example}
The derivative and the Hurwitz zeta function, trigamma function $\psi^{(1)}(z)$ and $\log(2)$.
\begin{multline}
\sum_{n,p \geq 0}\frac{1}{\Gamma (n+1) \Gamma (p+1)}(-1)^{n+p} \Gamma (n+p+1) B_p(\gamma +p \delta ) E_n(\alpha +n \beta ) (a-\beta  n-\delta  p)^{-n-p-3}\\
   \left(\left((a-\beta  n) (a+\beta +\beta  p)+a \delta  (n-p+1)+2 \beta  \delta  (n+1) (p+1)-\left(\delta ^2 (n+1)
   p\right)\right)\right. \\ \left.
    \left(H_{n+p}-\log (a-\beta  n-\delta  p)\right)-\delta  (a (n-p)+\beta  (2 n p+n+p-1))\right. \\ \left.-(a-\beta 
   n) (a+\beta  p)+\delta ^2 n p\right)\\
   =\frac{1}{2}
   \left(\zeta'\left(2,\frac{1}{2} (a+\alpha +\gamma )\right)-\log (2) \psi
   ^{(1)}\left(\frac{1}{2} (a+\alpha +\gamma )\right)\right).
\end{multline}
\end{example}
\begin{proof}
In this proof we will use equation (\ref{eq:theorem}) and take the first partial derivative with respect to $k$ then set $k=-1$ and simplify the left-hand side using equation (25.11.12) in \cite{dlmf}.
\end{proof}
\begin{example}
The derivative and the Hurwitz zeta function, tetragamma function $\psi^{(2)}(z)$ and $\log(2)$.
\begin{multline}\label{eq:dh2}
\sum_{n,p \geq 0}\frac{1}{\Gamma (n+1) \Gamma
   (p+1)}(-1)^{n+p} \Gamma (n+p+2) B_p(\gamma +p \delta ) E_n(\alpha +n \beta ) (a-\beta  n-\delta  p)^{-n-p-4}\\
   \left(-\left(\delta  (a (n-p+2)+\beta  (2 n p+3 n+3 p+6))+(a-\beta  n) (a+\beta  (p+2))-\left(\delta ^2 (n+2)
   p\right)\right)\right. \\ \left. \left(H_{n+p+1}-\log (a-\beta  n-\delta  p)\right)+\delta  (a (n-p+1)+\beta +2 \beta  (n
   p+n+p))\right. \\ \left.+(a-\beta  n) (a+\beta +\beta  p)-\left(\delta ^2 (n+1) p\right)\right)\\
   =-\frac{1}{2} \zeta'\left(3,\frac{1}{2} (a+\alpha +\gamma )\right)-\frac{1}{8} (\log
   (4)-1) \psi ^{(2)}\left(\frac{1}{2} (a+\alpha +\gamma )\right).
\end{multline}
\end{example}
\begin{proof}
In this proof we will use equation (\ref{eq:theorem}) and take the first partial derivative with respect to $k$ then set $k=-2$ and simplify the left-hand side using equation (25.11.12) in \cite{dlmf}.
\end{proof}
\begin{example}
The Derivative of the Riemann zeta function $\zeta'(3)$ and Apery's constant $\zeta(3)$.
\begin{multline}
\sum_{n,p \geq 0}\frac{1}{\Gamma (n+1) \Gamma (p+1)}B_p\left(\frac{p}{3}+1\right) E_n\left(\frac{n}{4}+1\right) (-1)^{n+p} 12^{n+p+2} (-3 n-4
   p+264)^{-n-p-4}\\
    \Gamma (n+p+2) \left((n (p-282)+260 p-73464) H_{n+p+1}+n (-p)+(n (-p)+282 n-260 p+73464)\right. \\ \left.
     \log
   \left(-\frac{n}{4}-\frac{p}{3}+22\right)+279 n-256 p+71556\right)\\
   =-\frac{\zeta
   '(3)}{2}+\frac{1}{4} \zeta (3) (\log (4)-1)-\frac{289853 \log (2)}{3456000}-\frac{259 \log
   (3)}{11664}-\frac{25523438671457 (\log (4)-1)}{85200014592000}\\
   -\frac{9 \log (5)}{2000}-\frac{\log
   (7)}{686}-\frac{\log (11)}{2662}.
\end{multline}
\end{example}
\begin{proof}
In this proof we will use equation (\ref{eq:dh2}) and set $a=22,\alpha=\gamma=1,\beta=\frac{1}{4},\delta=\frac{1}{3}$ and simplify using equation (1.6) in \cite{finch}.
\end{proof}
%
%
%
\section{Extended Generating Functions}
In this section we apply the methods of simultaneous equations and ordinary differential equations to derive extended forms involving the Bernoulli and Euler polynomials. The method involves finding the closed form solution after increasing the factorial in the denominator by 1. We first assign a general function to the right-hand side of the equation we wish to derive. Next we take the difference of these equations followed by taking the derivative of the equation we are solving for such that the left-hand side is the same as the difference of the equations. Next we equate the right-hand sides and solve the ordinary differential equation. \\\\
\subsection{Example 1: Euler's polynomial}
Starting with the initial formula given by;
\begin{equation}\label{ode:eq1}
\sum_{n=0}^{\infty}\frac{z^n e^{-n y z} E_n(x+n y)}{\Gamma(n+1)}=\frac{2 e^{x z}}{\left(e^z+1\right)
   (1-y z)}
\end{equation}
We wish to solve the formula given by;
\begin{equation}\label{ode:eq2}
\sum_{n=0}^{\infty}\frac{z^n e^{-n y z} E_n(x+n y)}{\Gamma(n+2)}=g(z)
\end{equation}
We next take the difference of equations (\ref{ode:eq1}) and (\ref{ode:eq2}) simplify to get;
\begin{equation}\label{ode:eq3}
\sum_{n=0}^{\infty}\frac{n z^n e^{-n y z} E_n(x+n y)}{\Gamma (n+2)}=-g(z)-\frac{2 e^{x
   z}}{\left(e^z+1\right) (y z-1)}
\end{equation}
Next we take the first partial derivative with respect to $z$ of equation (\ref{ode:eq2}) and multiply both sides by $\frac{z}{1-y z}$ such that the left-hand side is the same as equation (\ref{ode:eq3}) given by;
\begin{equation}\label{ode:eq4}
\sum_{n=0}^{\infty}\frac{n z^n e^{-n y z} E_n(x+n y)}{\Gamma (n+2)}=\frac{z g'(z)}{1-y
   z}
\end{equation}
Since the left-hand sides of equations (\ref{ode:eq3}) and (\ref{ode:eq4}) are the same we may equate the right-hand sides and derive the ordinary differential equation given by;
\begin{equation}
-\frac{z g'(z)}{1-y z}-g(z)-\frac{2 e^{x z}}{\left(e^z+1\right) (y
   z-1)}=0
\end{equation}
Solving the above ordinary differential equation with initial condition $g(0)=0$ and simplifying we get;
\begin{theorem}
For all $|Re(z)|<1,x,y\in\mathbb{C}$,
\begin{multline}
\sum_{n=0}^{\infty}\frac{z^n e^{-n y z} E_n(x+n y)}{\Gamma(n+2)}\\
=\frac{e^{y z}}{z (x-y)} \left(2 e^{z (x-y)}
   \, _2F_1\left(1,x-y;x-y+1;-e^z\right)\right. \\ \left.+(x-y) \left(\psi
   ^{(0)}\left(\frac{x-y}{2}\right)-\psi ^{(0)}\left(\frac{1}{2}
   (x-y+1)\right)\right)\right)
\end{multline}
\end{theorem}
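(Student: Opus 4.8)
The plan is to follow exactly the scheme laid out in the paragraph preceding the statement, using equations (\ref{ode:eq1})--(\ref{ode:eq4}) and then solving the resulting first-order linear ODE in closed form. Concretely, I start from the generating-function identity (\ref{ode:eq1}), which is just a restatement of (\ref{eq:euler}) with $u=ze^{-yz}$ expanded, and I introduce the unknown $g(z)$ as the sum (\ref{ode:eq2}) obtained by replacing $\Gamma(n+1)$ with $\Gamma(n+2)=(n+1)!$ in the denominator. Taking the difference (\ref{ode:eq1})$-$(\ref{ode:eq2}) and simplifying the coefficient $\frac{1}{n!}-\frac{1}{(n+1)!}=\frac{n}{(n+1)!}$ yields (\ref{ode:eq3}). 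Separately, differentiating (\ref{ode:eq2}) termwise with respect to $z$ and multiplying by $\frac{z}{1-yz}$ produces a series whose general term, after using $u=ze^{-yz}$ so that $\frac{du}{dz}=(1-yz)e^{-yz}$, matches the general term of (\ref{ode:eq3}); this gives (\ref{ode:eq4}). Equating the right-hand sides of (\ref{ode:eq3}) and (\ref{ode:eq4}) produces the ODE
\begin{equation*}
-\frac{z\,g'(z)}{1-yz}-g(z)-\frac{2e^{xz}}{(e^z+1)(yz-1)}=0,
\end{equation*}
valid on $|\mathrm{Re}(z)|<1$ by the convergence of the Euler generating function there.

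Next I would solve this linear ODE. Rewriting it as $g'(z)+\frac{1-yz}{z}\,g(z)=\frac{2e^{xz}}{z(e^z+1)}$, the integrating factor is $\exp\!\int\frac{1-yz}{z}\,dz=z\,e^{-yz}$ (up to a constant), so $\frac{d}{dz}\big(z e^{-yz}g(z)\big)=\frac{2e^{(x-y)z}}{e^z+1}$. Integrating from $0$ to $z$ and imposing the initial condition $g(0)=0$ (forced because the series (\ref{ode:eq2}) has no constant term and $z g(z)\to 0$), I get
\begin{equation*}
g(z)=\frac{e^{yz}}{z}\int_{0}^{z}\frac{2e^{(x-y)t}}{e^t+1}\,dt.
\end{equation*}
The remaining work is to evaluate this antiderivative. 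Substituting $s=-e^t$ turns $\int\frac{e^{(x-y)t}}{e^t+1}\,dt$ into an incomplete-Beta-type integral producing the hypergeometric term $e^{z(x-y)}\,{}_2F_1(1,x-y;x-y+1;-e^z)$, while the lower limit $t=0$ contributes the constant $\,{}_2F_1(1,x-y;x-y+1;-1)$, which is a Gauss-summable (or Hurwitz-Lerch) value expressible through the difference of digamma functions $\tfrac12\big(\psi^{(0)}(\tfrac{x-y}{2})-\psi^{(0)}(\tfrac{x-y+1}{2})\big)$ via the standard identity $\sum_{k\ge0}\frac{(-1)^k}{k+a}=\tfrac12\big(\psi(\tfrac{a+1}{2})-\psi(\tfrac{a}{2})\big)$. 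Collecting the factor $\frac{1}{x-y}$ that comes out of the hypergeometric normalization and the overall $\frac{e^{yz}}{z}$ gives precisely the claimed closed form.

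I expect the main obstacle to be the bookkeeping in evaluating $\int_0^z \frac{2e^{(x-y)t}}{e^t+1}\,dt$ and matching it to the stated ${}_2F_1$ expression with the correct argument $-e^z$ and the correct lower-limit constant: one must be careful that the antiderivative is only defined up to a constant, so the hypergeometric series ${}_2F_1(1,x-y;x-y+1;-e^z)$ by itself does not vanish at $z=0$, and the genuine definite integral requires subtracting its value at $z=0$, which is then re-expressed through the digamma difference. A secondary point requiring care is the identification in (\ref{ode:eq4}) that differentiating the series in $u=ze^{-yz}$ and multiplying by $\frac{z}{1-yz}$ exactly reproduces the multiplier $n$ in the coefficient; this uses $z\frac{d}{dz}=u\frac{d}{du}\cdot\frac{z(1-yz)e^{-yz}}{u}=u\frac{d}{du}(1-yz)$ together with the cancellation against $\frac{1}{1-yz}$, so $\frac{z}{1-yz}\frac{d}{dz}=u\frac{d}{du}$, and $u\frac{d}{du}u^n=n u^n$. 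Once these two identifications are in place, the ODE solution and the special-function evaluation are routine, and restricting to $|\mathrm{Re}(z)|<1$ guarantees all interchanges of sum, derivative, and integral are legitimate.
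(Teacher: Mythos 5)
Your proposal follows exactly the route the paper takes in Section 5 (Example 1): form $g(z)$ by bumping $\Gamma(n+1)$ to $\Gamma(n+2)$, subtract from the base generating function, match the resulting series against $\frac{z}{1-yz}g'(z)$ via $u=ze^{-yz}$, and solve the ensuing first-order linear ODE with $g(0)=0$; your integrating factor $ze^{-yz}$, the resulting integral $\frac{e^{yz}}{z}\int_0^z\frac{2e^{(x-y)t}}{e^t+1}\,dt$, and its evaluation through $\int\frac{s^{a-1}}{s+1}\,ds=\frac{s^a\,{}_2F_1(1,a;a+1;-s)}{a}$ plus the digamma value at the lower limit all reproduce the paper's stated closed form. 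The argument is correct and is essentially the same as the paper's, with the ODE solution and special-function bookkeeping spelled out in more detail than the paper provides.
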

where 
\begin{equation}
\int \frac{z^{a-1}}{z+1}dz=\frac{z^a \, _2F_1(1,a;a+1;-z)}{a},
\end{equation}
$_2F_1\left(a,b,c,z\right)$ is the hypergeometric function and $\psi ^{(0)}(z)$ is the zeroth derivative of the digamma function $\psi ^{(n)}(z)$.
\subsection{Example 2: Bernoulli's polynomial}
Repeating the above method we derive the generating function for Bernoulli's polynomial given by;
\begin{theorem}
For all $|Re(z)|<1,x,y\in\mathbb{C}$,
\begin{multline}
\sum_{n=0}^{\infty}\frac{z^n e^{-n y z} B_n(x+n y)}{\Gamma (n+2)}\\
=\frac{e^{y z}}{z}
   \left(\frac{e^{z (x-y)} \left(\left(x^2-2 x y+y^2\right) \Phi
   \left(e^z,2,x-y\right)+z (y-x) \,
   _2F_1\left(1,x-y;x-y+1;e^z\right)\right)}{(x-y)^2}\right. \\ \left.
   -\psi
   ^{(1)}(x-y)\right)
\end{multline}
\end{theorem}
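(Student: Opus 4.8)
The plan is to follow, \emph{mutatis mutandis}, the simultaneous-equation-and-ODE scheme used for the Euler polynomial in Example~1, now applied to the Bernoulli generating function. First I would write equation~(\ref{eq:bern}) with $u=ze^{-yz}$ in the form
\begin{equation*}
\sum_{n=0}^{\infty}\frac{z^{n}e^{-nyz}B_{n}(x+ny)}{\Gamma(n+1)}=\frac{z e^{xz}}{(e^{z}-1)(1-yz)},
\end{equation*}
and set $g(z)=\sum_{n=0}^{\infty}\frac{z^{n}e^{-nyz}B_{n}(x+ny)}{\Gamma(n+2)}$, the quantity to be evaluated. Subtracting the series for $g$ from the displayed one and using $\frac{1}{\Gamma(n+1)}-\frac{1}{\Gamma(n+2)}=\frac{n}{\Gamma(n+2)}$ gives $\sum_{n\ge0}\frac{n z^{n}e^{-nyz}B_{n}(x+ny)}{\Gamma(n+2)}=\frac{z e^{xz}}{(e^{z}-1)(1-yz)}-g(z)$. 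Since $\frac{d}{dz}\bigl(z^{n}e^{-nyz}\bigr)=n z^{n-1}e^{-nyz}(1-yz)$, differentiating the series for $g$ termwise and multiplying by $z/(1-yz)$ reproduces exactly this same left-hand side. Equating the two right-hand sides yields the linear first-order ODE $\frac{z g'(z)}{1-yz}+g(z)=\frac{z e^{xz}}{(e^{z}-1)(1-yz)}$, equivalently
\begin{equation*}
z g'(z)+(1-yz)g(z)=\frac{z e^{xz}}{e^{z}-1},
\end{equation*}
to be solved under the requirement that $g$ be regular at $z=0$.

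Next I would integrate the ODE using the integrating factor $\mu(z)=z e^{-yz}$, which recasts it as $\bigl(z e^{-yz}g(z)\bigr)'=\frac{z e^{(x-y)z}}{e^{z}-1}$; because the homogeneous solution $e^{yz}/z$ is singular at the origin, the integration constant must vanish, so
\begin{equation*}
g(z)=\frac{e^{yz}}{z}\int_{0}^{z}\frac{t\,e^{(x-y)t}}{e^{t}-1}\,dt.
\end{equation*}
Writing $a=x-y$ and substituting $u=e^{t}$ turns the integral into $\int_{1}^{e^{z}}\frac{u^{a-1}\log u}{u-1}\,du$. By the same computation that gives the antiderivative quoted at the close of Example~1 (now with $u-1$ in place of $z+1$, equivalently $z\mapsto-z$) one has $\int\frac{u^{a-1}}{u-1}\,du=-\frac{u^{a}}{a}\,{}_{2}F_{1}(1,a;a+1;u)=-\sum_{k\ge0}\frac{u^{k+a}}{k+a}$, and differentiating this identity in the parameter $a$ (using $\partial_{a}u^{a-1}=u^{a-1}\log u$) produces the antiderivative $-\frac{u^{a}\log u}{a}\,{}_{2}F_{1}(1,a;a+1;u)+u^{a}\Phi(u,2,a)$, where $\Phi(u,2,a)=\sum_{k\ge0}u^{k}/(k+a)^{2}$ is the Lerch transcendent. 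Evaluating between $u=1$ and $u=e^{z}$, with $\Phi(1,2,a)=\psi^{(1)}(a)$, gives
\begin{equation*}
\int_{0}^{z}\frac{t\,e^{at}}{e^{t}-1}\,dt=e^{az}\Phi(e^{z},2,a)-\frac{z\,e^{az}}{a}\,{}_{2}F_{1}(1,a;a+1;e^{z})-\psi^{(1)}(a),
\end{equation*}
and substituting $a=x-y$ together with $x^{2}-2xy+y^{2}=(x-y)^{2}$ and $z(y-x)/(x-y)^{2}=-z/(x-y)$ rewrites $g(z)$ in the stated closed form.

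The step I expect to be the main obstacle is the endpoint evaluation at $u=1$ (that is, $t=0$). There the term $-\frac{u^{a}\log u}{a}\,{}_{2}F_{1}(1,a;a+1;u)$ is a $0\cdot\infty$ indeterminacy, since the parameters of ${}_{2}F_{1}(1,a;a+1;u)$ satisfy $(a+1)-1-a=0$ and the function diverges logarithmically as $u\to1$; one must show this product tends to $0$ so that the only surviving boundary contribution is $\Phi(1,2,a)=\psi^{(1)}(a)$. An error there would corrupt $g$ by an added constant multiple of $e^{yz}/z$. The remaining points are routine: the geometric expansion of $\frac{1}{u-1}$ used to locate the antiderivative is valid for $|u|<1$ and the result is extended by analytic continuation, while the stated domain legitimizes the termwise differentiation of the series defining $g$. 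A consistency check is to compare limits as $z\to0$: the left side tends to $g(0)=B_{0}(x)/\Gamma(2)=1$, and the right side, after a short expansion of its numerator (which is $\sim z$), gives the same value $1$.
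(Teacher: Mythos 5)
Your proposal is correct and follows essentially the same route as the paper, whose entire proof of this theorem is the one-line remark that one repeats the ODE method of Example 1; you have simply carried out that repetition explicitly, arriving at $zg'+(1-yz)g=\tfrac{ze^{xz}}{e^z-1}$, the integrating factor $ze^{-yz}$, and the parameter-differentiation of the antiderivative $-\tfrac{u^{a}}{a}\,{}_2F_1(1,a;a+1;u)$. The endpoint issue you flag at $u=1$ resolves as you expect, since ${}_2F_1(1,a;a+1;u)\sim -a\log(1-u)$ while $\log u=O(u-1)$, so that product vanishes and only $\Phi(1,2,a)=\psi^{(1)}(a)$ survives.
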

\section{Discussion}
In this paper, we have presented a method for deriving a bilateral generating function involving the product the Bernoulli and Euler polynomials along with some interesting related forms using contour integration. We would like to apply this method to derive other generating functions in future work. The results presented were numerically verified for both real and imaginary and complex values of the parameters in the integrals using Mathematica by Wolfram.
\end{document}